\crefname{theorem}{Theorem}{Theorems}
\crefname{lemma}{Lemma}{Lemmas}
\crefname{claim}{Claim}{Claims}
\crefname{prop}{Proposition}{Propositions}
\theoremstyle{plain}
\newtheorem{theorem}{Theorem}[section]
\newtheorem{proposition}[theorem]{Proposition}
\newtheorem{lemma}[theorem]{Lemma}
\newtheorem{conjecture}[theorem]{Conjecture}
\theoremstyle{definition}
\newtheorem{def/prop}[theorem]{Definition/Proposition}
\newcommand{\lpr}[1]{\left(#1\right)}
\newcommand{\cE}{\mathcal{E}}
\newcommand{\cL}{\mathcal{L}}
\renewcommand{\Pr}{\mathbb{P}}
\title{\scshape On Off-diagonal $F$-Ramsey numbers}
\author{Sammy Luo \thanks{Massachusetts Institute of Technology. Research supported by NSF Award No. 2303290. \texttt{sammyluo@mit.edu}}
\and
Zixuan Xu \thanks{Massachusetts Institute of Technology. \texttt{zixuanxu@mit.edu}}
}
\begin{document}

\maketitle

\begin{abstract}
A graph is $(t_1, t_2)$-Ramsey if any red-blue coloring of its edges contains either a red copy of $K_{t_1}$ or a blue copy of $K_{t_2}$. The size Ramsey number is the minimum number of edges contained in a $(t_1,t_2)$-Ramsey graph. Generalizing the notion of size Ramsey numbers, the $F$-Ramsey number $r_F(t_1, t_2)$ is defined to be the minimum number of copies of $F$ in a $(t_1,t_2)$-Ramsey graph. It is easy to see that $r_{K_s}(t_1,t_2)\le \binom{r(t_1,t_2)}{s}$. Recently, Fox, Tidor, and Zhang showed that equality holds in this bound when $s=3$ and $t_1=t_2$, i.e. $r_{K_3}(t,t) = \binom{r(t,t)}{3}$. They further conjectured that $r_{K_s}(t,t)=\binom{r(t,t)}{s}$ for all $s\le t$, in response to a question of Spiro. 

In this work, we study the off-diagonal variant of this conjecture: is it true that $r_{K_s}(t_1,t_2)=\binom{r(t_1,t_2)}{s}$ whenever $s\le \max(t_1,t_2)$? Harnessing the constructions used in the recent breakthrough work of Mattheus and Verstraëte on the asymptotics of $r(4,t)$, we show that when $t_1$ is $3$ or $4$, the above equality holds up to a lower order term in the exponent.
\end{abstract}

\section{Introduction}

We say that a graph $G$ is \emph{$(t_1,t_2)$-Ramsey} (or \emph{$t$-Ramsey} if $t_1=t_2=t$) if every $2$-coloring of its edges in red and blue results in either a red copy of $K_{t_1}$ or a blue copy of $K_{t_2}$. The smallest $n$ such that $K_n$ is $(t_1,t_2)$-Ramsey is called the Ramsey number $r(t_1,t_2)$ (or $r(t)$ in the diagonal case). A closely related quantity is the \emph{size Ramsey number} $\hat{r}(t_1,t_2)$, defined to be the minimum number of edges contained in a $(t_1,t_2)$-Ramsey graph. A classical observation by Chv\'atal gives the relationship between the usual Ramsey number and size Ramsey number for complete graphs: For any $t_1,t_2\ge 1$,
\[
    \hat{r}(t_1,t_2)=\binom{r(t_1,t_2)}{2}.
\]
A recent paper of Fox, Tidor, and Zhang \cite{fox2023triangle} initiated the study of a new variant of the Ramsey number: For a given graph $F$, the $F$-Ramsey number $r_F(t_1,t_2)$ is the smallest number of copies of $F$ contained in a graph that is $(t_1,t_2)$-Ramsey. This definition simultaneously generalizes the usual and size Ramsey numbers: taking $F=K_1$ gives the usual Ramsey number, while taking $F=K_2$ yields the size Ramsey number. 

In \cite{fox2023triangle}, the authors addressed a natural question of Sam Spiro asking whether an analogue of Chv\'atal's result holds for choices of $F$ other than $K_2$. They showed that 
\[
r_{K_3}(t,t)=\binom{r(t)}{3},
\]
for all sufficiently large $t$. They conjectured that for all $s\le t$,
\begin{equation}\label{eq:conj-diagonal}
    r_{K_s}(t,t)=\binom{r(t)}{s}, \quad \forall s\le t.
\end{equation}

Their conjecture naturally extends to the off-diagonal setting.
\begin{conjecture}
    \label{conj:offdiag}
    For all positive integers $s,t_1,t_2$ satisfying $s\le \max(t_1,t_2)$, 
    \[r_{K_s}(t_1,t_2) = \binom{r(t_1,t_2)}{s}.\]
\end{conjecture}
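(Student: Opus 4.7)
The plan is to establish the conjecture by contrapositive. The upper bound $r_{K_s}(t_1, t_2) \le \binom{r(t_1,t_2)}{s}$ is trivial: take $G = K_{r(t_1,t_2)}$ and observe that it is $(t_1,t_2)$-Ramsey and contains exactly $\binom{r(t_1,t_2)}{s}$ copies of $K_s$. The content lies in the matching lower bound, which says every $(t_1,t_2)$-Ramsey graph $G$ contains at least $\binom{r(t_1,t_2)}{s}$ copies of $K_s$. I would prove this contrapositively: assuming $\#K_s(G) < \binom{r(t_1,t_2)}{s}$, construct a red/blue edge coloring of $G$ with no red $K_{t_1}$ and no blue $K_{t_2}$, certifying that $G$ is not $(t_1,t_2)$-Ramsey.

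The coloring is built probabilistically, by pulling back a near-extremal Ramsey witness. Fix a graph $H$ on $m$ vertices that is $K_{t_1}$-free and satisfies $\alpha(H) < t_2$, with $m$ as close to $r(t_1,t_2) - 1$ as possible. For $t_1 = 4$ I would use the Mattheus–Verstraëte construction, which matches $r(4,t_2)$ up to a constant factor and supports strong pseudo-random control on codegrees and small-subgraph counts; for $t_1 = 3$ the analogous role is played by the triangle-free process / Kim / Bohman–Keevash–type graphs. Sample a uniformly random map $\phi : V(G) \to V(H)$ and color an edge $uv \in E(G)$ red if $\phi(u)\phi(v) \in E(H)$ and blue otherwise, with any fixed convention when $\phi(u) = \phi(v)$. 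Because $H$ contains no $K_{t_1}$ and no independent set of size $t_2$, any monochromatic $K_{t_1}$ or $K_{t_2}$ produced in $G$ must involve at least one collision under $\phi$.

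The expected number of such bad monochromatic cliques is then controlled by the collision structure. Iteratively identifying collided pairs inside a monochromatic $K_{t_1}$ (resp.\ $K_{t_2}$) reduces the witness to a clique of size at most $s$ in $G$ whose image lies inside a specified pseudo-random substructure of $H$ (a common red-neighborhood in the first case, a common blue-neighborhood in the second). Using the $K_s$-count hypothesis and the codegree bounds on $H$, the expected number of bad cliques can be bounded, up to pseudo-random error, by
\[
\esp\lbr{\#\text{bad monochromatic cliques}} \lesssim \frac{\#K_s(G)\cdot N(H)}{m^{s}},
\]
where $N(H)$ counts compatible extensions in $H$. For $H$ sufficiently pseudo-random, this is $o(1)$ once $\#K_s(G)<\binom{r(t_1,t_2)}{s}$, and a good coloring exists with positive probability.

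The main obstacle is the tightness of the final bound. Current constructions match $r(t_1,t_2)$ only up to polylog (for $t_1=3$) or constant (for $t_1=4$) factors, so this probabilistic argument directly yields only the weaker estimate $r_{K_s}(t_1,t_2)\ge \binom{r(t_1,t_2)}{s}^{1-o(1)}$, which is precisely the "equality up to a lower order term in the exponent" bound advertised in the abstract for $t_1\in\{3,4\}$. Closing the $o(1)$ gap to obtain exact equality would require either an explicit extremal $(t_1,t_2)$-Ramsey graph on exactly $r(t_1,t_2)-1$ vertices—unavailable in almost all cases—or a structural argument in the spirit of Fox–Tidor–Zhang that inductively refines an arbitrary $(t_1,t_2)$-Ramsey graph toward a $K_{r(t_1,t_2)}$ core, propagating sharpness from smaller values of $s$. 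Extending the argument to $t_1\ge 5$ faces an even more basic obstruction, since no near-optimal Ramsey constructions are currently known in that regime.
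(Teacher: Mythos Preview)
The statement you are attempting is \emph{Conjecture~\ref{conj:offdiag}}; the paper does not prove it, and indeed states it as open.  Your proposal does not prove it either---as you yourself say, the argument only reaches $r_{K_s}(t_1,t_2)\ge\binom{r(t_1,t_2)}{s}^{1-o(1)}$, and you identify the exact-equality step as an unsolved obstacle.  So there is no ``paper's proof'' to compare against, and your write-up is a strategy sketch for the weaker asymptotic results (\cref{thm:rs3t,thm:rs4t}) rather than a proof of the conjecture.

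Even as a route to those weaker results, your outline diverges from the paper and has a gap.  The paper does \emph{not} analyse collisions or codegrees to tie the bad-clique count to $\#K_s(G)$.  Instead it (i) colours every edge with $\pi(u)=\pi(v)$ blue, so that red $K_{t_1}$'s are \emph{impossible} and only blue $K_{t_2}$'s need to be controlled; (ii) bounds, for a fixed copy of $K_t$ in $G$, the probability that its image is an independent set of \emph{any} size, using an explicit count of independent sets of each size in $H$ obtained from \cref{prop:countindeps}; (iii) deduces a lower bound on the number of copies of $K_t$ in $G$; and (iv) passes from $K_t$ to $K_s$ for all $s\le t$ via the Kruskal--Katona theorem (\cref{lem:kruskal-katona}).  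Step~(iv) is precisely what lets one treat all $s$ at once; your sketch instead asserts that ``iteratively identifying collided pairs \dots\ reduces the witness to a clique of size at most $s$,'' but nothing in the argument singles out the parameter $s$, and the claimed expectation bound $\#K_s(G)\cdot N(H)/m^{s}$ is not derived.

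Two smaller points.  First, for $t_1=3$ the paper does not use Kim/triangle-free-process graphs; it uses the algebraic construction of \cref{thm:r3t-H}, because the argument needs quantitative control on $e(H[X])$ for all large $X$ (hence on the \emph{number} of independent sets of each size), not merely $\alpha(H)<t_2$.  Second, you have the gap sizes reversed: $r(3,t)$ is known up to a constant factor, while $r(4,t)$ is known only up to polylogarithmic factors.
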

It is easy to see that the upper bound in the conjectured equality is true, since $K_{r(t_1,t_2)}$ is $(t_1,t_2)$-Ramsey. The conjecture is trivially true when $s = 1$, and the case $s=2$ is Chv\'atal's observation. The main result of \cite{fox2023triangle} corresponds to the case where $s=3$ and $t_1=t_2$ is sufficiently large, but the same argument shows that \cref{conj:offdiag} is true for $s = 3$ in general. Unfortunately, their proof technique cannot be directly extended to $s \ge 4$, and it appears that significantly different strategies would be required to tackle the conjecture in full generality.

In this paper, we investigate \cref{conj:offdiag} in the simplest off-diagonal cases $t_1=3$ and $t_1=4$ (the cases $t_1\le 2$ are trivial). In these cases, the asymptotic value of the usual Ramsey number is almost fully determined: for $r(3,t)$, results of Ajtai, Koml\'os, and Szemer\'edi \cite{AKS80} and of Kim \cite{KimR3t} combine to yield $r(3,t)=\Theta(t^2 / \log t)$ as $t\to \infty$; for $r(4,t)$, results of Ajtai, Koml\'os, and Szemer\'edi \cite{AKS80} and of Mattheus and Verstraëte \cite{mattheus2024} combine to yield
\[C\cdot \frac{t^3}{\log^4 t} \le r(4,t) \le (1+o(1))\frac{t^3}{\log^2 t},\]
for some absolute constant $C> 0$ as $t\to \infty$, determining the asymptotics of $r(4,t)$ up to logarithmic factors. Using these known bounds on the usual Ramsey number, we obtain the following results indicating that \cref{conj:offdiag} holds in these cases up to a lower order term in the exponent. 

\begin{theorem}
    \label{thm:rs3t}
    For all $s\le t$,
    \[
        r_{K_s}(3,t)=\binom{\Omega\left(\frac{t^2}{\log^2(t)}\right)}{s}.
    \]
    In particular, 
    \[r_{K_s}(3,t)=\binom{r(3,t)}{s}^{1-o(1)}.\]
\end{theorem}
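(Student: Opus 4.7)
The plan is to combine a random-coloring argument with the Kruskal--Katona lower-shadow inequality. By Kruskal--Katona applied to the family of $K_t$-subgraphs of a graph, a bound $m_t(G)\ge\binom{N}{t}$ for every $(3,t)$-Ramsey graph $G$ implies $m_s(G)\ge\binom{N}{s}$ for all $s\le t$, where $m_k(G)$ denotes the number of $K_k$-subgraphs of $G$. Thus the theorem reduces to the single case $s=t$, and it suffices to establish $m_t(G)\ge\binom{\Omega(t^2/\log^2 t)}{t}$.

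Fix $N=ct^2/\log^2 t$ for a small constant $c>0$, and let $H$ be a triangle-free graph on $[N]$ with $\alpha(H)\le t-1$ and strong pseudo-random properties (near-regularity of degrees, controlled codegrees), obtained via Kim's triangle-free construction or a Mattheus--Verstra\"ete-style variant. Since $N$ lies well below $r(3,t)\sim t^2/\log t$, there is enough slack to impose such pseudo-randomness. Given any $(3,t)$-Ramsey graph $G$, draw a uniform random map $\chi\colon V(G)\to[N]$, and color edge $uv\in E(G)$ red if $\chi(u)=\chi(v)$ or $\chi(u)\chi(v)\in E(H)$, and blue otherwise. The choice of $H$ rules out blue $K_t$: among any $t$ vertices, either two share a $\chi$-value (producing a red edge) or the $t$ distinct $\chi$-values cannot form an independent set in $H$ by $\alpha(H)\le t-1$. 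The $(3,t)$-Ramsey property therefore forces a red $K_3$ in every realization.

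Because $H$ is triangle-free, a red $K_s$ in $G$ requires its $\chi$-image to form a clique in $H\cup\text{loops}$, and hence to lie on at most two (adjacent) vertices of $H$. A direct probability computation yields
\[
\esp\bigl[\#\,\text{red } K_s\bigr] \;=\; m_s(G)\cdot\Bigl(N^{1-s}+(2^s-2)|E(H)|/N^s\Bigr),
\]
and for $s=3$ the Ramsey hypothesis directly forces $m_3(G)\gtrsim N^3/|E(H)|$.

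The main obstacle is that this random-coloring argument directly controls only the red $K_3$ count, whereas what is needed is the considerably stronger bound on $m_t(G)$. The strategy I would pursue is to exploit the pseudo-random structure of $H$ so that the red $K_3$'s forced by the Ramsey hypothesis can be attributed specifically to $K_t$-substructures of $G$ whose vertices collapse under $\chi$ to a small number of $H$-vertices; with sufficient pseudo-randomness (uniform codegrees across vertex subsets of $[N]$, as in the Mattheus--Verstra\"ete construction), one can then convert the expected red $K_3$-count into a proportional lower bound on $m_t(G)$ via a careful accounting that matches each collapsed $K_t$ against its contribution of $\binom{t}{3}$ red triangles. Precisely calibrating $|E(H)|$ near the Kim threshold $\Omega(N^2\log(N/t)/t)$ against the target scale $N=\Theta(t^2/\log^2 t)$ is the most delicate step, and it is what produces the $\log^2 t$ rather than $\log t$ factor—that is, the lower-order-term loss in the exponent stated in the theorem.
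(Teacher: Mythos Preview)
Your setup is on the right track, but the decision to color collisions $\chi(u)=\chi(v)$ \emph{red} is exactly backwards, and this is what traps you into bounding $m_3(G)$ instead of $m_t(G)$. The paragraph you flag as ``the main obstacle'' is not a delicate step to be calibrated; it is a genuine dead end. Nothing in the Ramsey hypothesis forces the red triangles that appear to lie inside copies of $K_t$ in $G$, and no amount of pseudo-randomness of $H$ can manufacture such a link, so the proposed conversion of a red-$K_3$ count into a $K_t$ count has no mechanism behind it.

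The correct move is to flip the convention: color an edge red if its endpoints map to an edge of $H$, and blue otherwise (including collisions). Since $H$ is triangle-free, there are then deterministically no red $K_3$'s, so the Ramsey property forces a blue $K_t$ in every realization. A fixed copy of $K_t$ in $G$ is all blue only if its image is an independent set in $H$, and the heart of the argument is an upper bound on the probability $\rho$ that this occurs. For this you need more than $\alpha(H)\le t-1$: you need a bound on the \emph{number} of independent sets of each size $i\le t$ in $H$. This is exactly what the Mattheus--Verstra\"ete triangle-free graph on $n=\Theta(q^3)$ vertices (not $\Theta(t^2/\log^2 t)$ vertices) provides, via its edge-density property on large subsets together with a container-type counting lemma. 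Summing over the possible image sizes gives $\rho = O\bigl((t/(q^2\log^2 q))^t\bigr)$ with $t=\Theta(q\log^2 q)$, whence $m_t(G)\ge 1/\rho=\binom{\Omega(t^2/\log^2 t)}{t}$ directly. Kruskal--Katona then handles $s\le t$ as you said.
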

\begin{theorem}
    \label{thm:rs4t}
    For all sufficiently large $t$ and all $s\le t$,
    \[
        r_{K_s}(4,t)=\binom{\Omega\left(\frac{t^3}{\log^4(t)}\right)}{s}.
    \]
    In particular, 
    \[r_{K_s}(4,t)=\binom{r(4,t)}{s}^{1-o(1)}.\]
\end{theorem}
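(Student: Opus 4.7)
The plan is to prove the contrapositive: I will show that any graph $G$ with strictly fewer than $\binom{N}{s}$ copies of $K_s$, where $N = \Omega(t^3/\log^4 t)$, is not $(4,t)$-Ramsey, by exhibiting a $2$-coloring of $E(G)$ avoiding both red $K_4$ and blue $K_t$. The main external input is a $K_4$-free graph $H$ on $N$ vertices with $\alpha(H) < t$, obtained from the Ajtai--Komlós--Szemerédi lower bound on $r(4,t)$ (or, ideally, a sharpened variant extracted from the Mattheus--Verstraëte construction).

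The basic coloring scheme is to pull back the partition of $E(K_N)$ coming from $H$ through a map $\phi \colon V(G) \to V(H)$, coloring $uv \in E(G)$ red iff $\phi(u)\phi(v) \in E(H)$. Any red $K_4$ in $G$ would produce four distinct $\phi$-images forming a $K_4$ in $H$, impossible by $K_4$-freeness, so red $K_4$'s are avoided automatically. A blue $K_t$ on vertices $v_1, \dots, v_t$ corresponds to $\{\phi(v_i)\}$ being an independent multiset in $H$, which (since $\alpha(H) < t$) forces a $\phi$-collision among the $v_i$'s. To convert the $K_s$-count hypothesis into bookkeeping on the ``bad'' $K_t$ candidates, I would apply the clique form of the Kruskal--Katona--Lovász inequality, upgrading $\#K_s(G) < \binom{N}{s}$ to $\#K_t(G) < \binom{N}{t}$. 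The map $\phi$ can then be chosen in one of two complementary ways: either uniformly at random, appealing to quasirandomness of $H$ to bound the probability that a given $K_t$ becomes blue; or through an iterative peeling step that deletes vertices contained in fewer than $\binom{N-1}{s-1}$ copies of $K_s$ until the remaining induced subgraph $G^*$ has fewer than $N$ vertices, on which an injective $\phi$ automatically avoids all collisions.

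The main obstacle is quantitative. A naive first-moment bound with uniform $\phi$ gives an expected blue $K_t$ count of order $\binom{N}{t} (\alpha(H)/N)^t \sim e^t$, which diverges badly, so additional structure must be exploited to defeat this. For the random-$\phi$ route, this means establishing strong pseudorandomness of $H$ — for instance, a sharp bound on the number of independent $t$-multisets in $H$ — beyond what the bare AKS construction provides, which is where the algebraic structure of the Mattheus--Verstraëte construction is expected to come in. For the peeling route, the analogous difficulty shifts to the extension step: one must color the edges incident to each peeled vertex without creating a new blue $K_t$ or red $K_4$ crossing the peeled boundary, exploiting the fact that each peeled vertex lies in few $K_s$-copies and hence (again by Kruskal--Katona applied to its neighborhood) has limited local clique structure. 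Balancing the $K_s$-count assumption, the Kruskal--Katona upper bound on higher cliques, and the structural properties of $H$, so that the final logarithmic loss stays bounded by $\log^4 t$, is where the bulk of the technical work is expected to reside.
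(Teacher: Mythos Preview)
Your overall framework---a random map $\phi\colon V(G)\to V(H)$ into a $K_4$-free host graph, the pullback coloring, a first-moment bound on blue $K_t$'s, and then Kruskal--Katona to pass from $K_t$-counts to $K_s$-counts---is exactly the route the paper takes. The peeling alternative you sketch is not used and is not needed.

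The genuine gap in your plan is the coupling of $|V(H)|$ with the target quantity $N=\Theta(t^3/\log^4 t)$ together with the requirement $\alpha(H)<t$. That pairing is precisely what produces the $e^t$ blow-up you diagnose, and it is not how the paper proceeds. The paper takes $H$ to be the \emph{full} Mattheus--Verstra\"ete graph on $n=q^2(q^2-q+1)=\Theta(q^4)$ vertices, with $t=\Theta(q\log^2 q)$; thus $n=\Theta(t^4/\log^8 t)$, an order of magnitude larger than $N$, and no hypothesis $\alpha(H)<t$ is imposed at all. What replaces that hypothesis is a direct count of independent sets: the local density property of $H$ (every $X$ with $|X|\ge 2^{24}q^2$ has $e(H[X])\ge |X|^2/(256q)$) feeds into a container-type bound showing that for every $t'\in[t/8,t]$ the number of independent $t'$-sets in $H$ is at most $(q/\log^2 q)^{t'}$. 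Splitting the sum over image sizes $i$ at $t/8$ (small $i$ handled by the crude bound $\binom{n}{i}(i/n)^t$, large $i$ by the independent-set count) then yields $\rho=O\bigl(t^t(q^3\log^2 q)^{-t}\bigr)$, and since $q^3\log^2 q=\Theta(t^3/\log^4 t)$ this gives $r_{K_t}(4,t)\ge 1/\rho=\binom{\Omega(t^3/\log^4 t)}{t}$ directly. So the missing move is: decouple $|V(H)|$ from $N$, drop the condition $\alpha(H)<t$, and replace it with the per-size independent-set count coming from the local edge density of the Mattheus--Verstra\"ete graph.
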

In both of these results, the implicit constant in the $\Omega$ notation is an absolute constant, and the $o(1)$ term in the exponent goes to zero as $t$ grows, with no dependence on $s$.

Our results provide evidence towards \cref{conj:offdiag} being at least asymptotically true. In contrast to the arguments in \cite{fox2023triangle}, our methods work equally well for all $s\le t$.

Our arguments utilize the construction given in \cite{mattheus2024} of a $K_4$-free graph $H$ with few independent sets of size roughly $t$ (and an analogous construction of a $K_3$-free graph). The key idea is to produce an edge-coloring contradicting the Ramsey property for a given graph from a random homomorphism into $H$, which then yields a lower bound on the number of copies of $K_t$ in the relevant Ramsey graph. The Kruskal-Katona theorem then converts this into an appropriate lower bound on the number of copies of $K_s$ for each $s\le t$. 

The paper is organized as follows. In \cref{sec:complete-kst}, we present the proofs of our main results on off-diagonal $K_s$-Ramsey numbers. We first prove \cref{thm:rs4t} in \cref{sec:rs4t}, then use a similar argument in \cref{sec:rs3t} to prove \cref{thm:rs3t}. Finally, in \cref{sec:cycle}, we discuss how our strategy can be applied to the more general setting of $F$-Ramsey numbers of $(H_1,H_2)$-Ramsey graphs.

\section{Off-diagonal $K_s$-Ramsey Numbers}\label{sec:complete-kst}

This section contains our proofs of the lower bounds on both $r_{K_s}(3,t)$ and $r_{K_s}(4,t)$. While the arguments for the two settings are of similar complexity, we will begin by considering the case of $(4,t)$-Ramsey graphs, because the construction of Mattheus and Verstra\"ete that we will utilize for this case, while somewhat more intricate than the construction for the case of $(3,t)$-Ramsey graphs, is more explicitly described in~\cite{mattheus2024}.

\subsection{Asymptotics of $r_{K_s}(4,t)$}\label{sec:rs4t}

We use the following result of Mattheus and Verstraëte, which yields a construction of a family of $K_4$-free graphs with few independent sets of size $t$, for all sufficiently large $t$.

\begin{theorem}[\cite{mattheus2024}]
    \label{thm:r4t}
    For each prime power $q \ge 2^{40}$, there exists a $K_4$-free graph $H$ with $q^2(q^2-q+1)$ vertices such that for every set $X$ of at least $m = 2^{24}q^2$ vertices of $H$,
    \[
    e(H [X]) \ge \frac{|X|^2}{256q}.
    \]
\end{theorem}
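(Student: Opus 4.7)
The plan is to follow the strategy of Mattheus and Verstra\"ete, building $H$ as a sparsified subgraph of an algebraic ``host'' graph coming from incidence geometry over $\mathbb{F}_{q^2}$, then simultaneously verifying $K_4$-freeness and the required edge-density lower bound on large subsets.

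First, I would fix a Hermitian unital $U \subset \mathrm{PG}(2, q^2)$, the standard set of $q^3+1$ points characterized by the property that every projective line meets $U$ in either $1$ point (a tangent) or $q+1$ points (a secant). The vertex set of the host graph $G_0$ will be $\mathrm{PG}(2, q^2) \setminus U$, which has exactly $q^4 + q^2 + 1 - (q^3+1) = q^2(q^2-q+1)$ elements, matching the target $|V(H)|$. Two vertices of $G_0$ are declared adjacent when the unique projective line through them is tangent to $U$. A direct incidence count (there are $q+1$ tangent lines through each off-unital point, each containing $q^2-1$ other off-unital points) shows that $G_0$ is roughly $q^3$-regular, and by an eigenvalue or character-sum estimate on the associated Hermitian variety, one obtains a mixing bound of the form $e(G_0[X]) = (1+o(1))|X|^2/(2q)$ for all sufficiently large vertex subsets $X$.

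Next, I would control the $K_4$ structure of $G_0$: although $G_0$ need not be $K_4$-free on its own, an algebraic count using the incidence structure of $U$ shows that the number of $K_4$'s in $G_0[X]$ is an order of magnitude smaller than $|X|^2/q$ whenever $|X| = \Omega(q^2)$. I would then obtain $H$ by a random deletion or alteration procedure---typically retaining each edge independently with a carefully tuned probability $p$, and then removing one edge from every surviving $K_4$. Combining Chernoff concentration for the number of edges in each $G_0[X]$ with a similar tail bound for the number of $K_4$'s to be deleted, followed by a union bound over all subsets $X$ of size at least $m = 2^{24}q^2$, should give a positive-probability event on which $H$ is $K_4$-free while still satisfying $e(H[X]) \ge |X|^2/(256q)$ for every such $X$. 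The main obstacle lies in making this density bound hold \emph{uniformly} over all large subsets rather than just in expectation: the threshold $m = 2^{24}q^2$ is calibrated so that the Chernoff tail on a single fixed $X$ of size $m$ dominates the crude union-bound factor $\binom{q^2(q^2-q+1)}{m}$, and the constant $1/256$ leaves enough multiplicative slack to absorb both the sparsification loss and the deletions used to destroy the $K_4$'s; the condition $q \ge 2^{40}$ arises precisely when these two estimates are balanced.
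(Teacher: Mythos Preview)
The paper does not prove this theorem; it is quoted verbatim from Mattheus--Verstra\"ete and used as a black box. So there is no in-paper proof to match, but your sketch diverges from the actual MV argument in a way that creates a real gap.

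Your host graph $G_0$, with two off-unital points adjacent whenever their line is tangent to $U$, is not merely ``not $K_4$-free'': each of the $q^3+1$ tangent lines contributes a clique on its $q^2$ off-unital points, so $G_0$ contains on the order of $q^{11}$ copies of $K_4$ against only about $q^7$ edges. A sparsify-then-delete scheme cannot repair this. Keeping each edge with probability $p$ leaves roughly $p^6 q^{11}$ copies of $K_4$ versus $p q^7$ edges, so making the deletions negligible forces $p^5 \ll q^{-4}$, i.e.\ $p \ll q^{-4/5}$. But then the edge density drops to about $p/q \le q^{-9/5}$, and on a set $X$ of size $2^{24}q^2$ you retain only about $|X|^2 q^{-9/5}$ edges, far below the required $|X|^2/(256q)$. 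So the claimed step ``the number of $K_4$'s in $G_0[X]$ is an order of magnitude smaller than $|X|^2/q$'' is false, and the alteration cannot close the gap.

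What Mattheus and Verstra\"ete actually do is different in exactly this place: rather than taking all pairs on a common tangent and later deleting, they randomly $2$-colour the $q^2$ off-unital points on each tangent line and place edges only between opposite colours, so each tangent contributes a complete bipartite graph rather than a clique. A $K_4$ in the resulting graph would then force four tangent lines whose six pairwise intersection points lie on $U$ in an O'Nan configuration, and Hermitian unitals provably contain no such configuration. Thus $K_4$-freeness is obtained structurally, with no deletions at all, and the density lower bound on large $X$ follows from the pseudorandomness of the point--tangent incidence together with concentration for the random bipartitions. Your geometric setup (the unital, the vertex count, the tangent-line incidences) is correct; the missing idea is this bipartition trick and the O'Nan-configuration argument that makes $K_4$-freeness automatic.
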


The following result is used in \cite{mattheus2024} to give an upper bound on the number of independent sets of a fixed size $t=2^{30}q\log^2 q$ in the graph $H$ given by \cref{thm:r4t}. We will use it to bound the number of independent sets of a certain range of sizes.

\begin{proposition}[\protect{\cite[Proposition 4]{mattheus2024}}]
    \label{prop:countindeps}
    Let $H$ be a graph on $n$ vertices, and let $r,R\in \mathbb{N}$, and $\alpha\in [0,1]$ satisfy:
    \[
    e^{-\alpha r}n\le R,
    \]
    and, for every subset $X\subseteq V(H)$ of at least $R$ vertices,
    \[
    2e(X)\ge \alpha|X|^2.
    \]
    Then for any $t\ge r$, the number of independent sets of size $t$ in $H$ is at most
    \[
    \binom{n}{r}\binom{R}{t-r}.
    \]
\end{proposition}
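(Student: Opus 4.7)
The plan is to construct a canonical injection $I \mapsto (A, B)$ from the size-$t$ independent sets of $H$ into pairs where $A \in \binom{V(H)}{r}$ and $B$ is a $(t-r)$-subset of a set $U(A) \subseteq V(H)$ of size at most $R$ that depends only on $A$ (and on $H$). Any such encoding would immediately yield the stated bound $\binom{n}{r}\binom{R}{t-r}$.

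I would define the encoder via a greedy degree-stripping process. Fix once and for all a canonical ordering of $V(H)$, and given $I$ initialize $U = V(H)$, $A = \emptyset$. At each step, let $w$ be the vertex of maximum degree in $H[U]$ (breaking ties by the canonical ordering). If $w \in I$, add $w$ to $A$ and remove $w$ together with its $U$-neighbors from $U$ (an \emph{$I$-step}); otherwise simply remove $w$ from $U$ (a \emph{non-$I$-step}). This main loop runs until $|A| = r$ or $|U| \le R$; if it exits with $|A| < r$, I extend $A$ by repeatedly adding the canonically smallest vertex of $I \cap U$ to $A$ (and removing it from $U$) until $|A| = r$. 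The invariant $|I \cap U| = t - |A|$ is preserved throughout (since $I$ is independent, $I$-steps remove exactly one vertex of $I$ from $U$ while non-$I$-steps remove none), so the extension phase is always feasible. Finally set $B := I \setminus A$, which satisfies $B \subseteq U$ at termination.

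To verify that $|U| \le R$ upon termination: whenever an $I$-step executes, $|U| > R$ holds beforehand, so by hypothesis the max-degree vertex has degree at least $\alpha|U|$, and $|U|$ shrinks by a factor of at most $1-\alpha$; non-$I$-steps only shrink $|U|$ further. Hence after $r$ $I$-steps we have $|U| \le (1-\alpha)^r n \le e^{-\alpha r} n \le R$, so whenever the main loop terminates with $|A| = r$ we already have $|U| \le R$, and the extension phase (when triggered) only decreases $|U|$ further.

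The main obstacle, and the heart of the argument, is showing that the final set $U$ depends only on $A$. The key observation is that every vertex $w$ examined as the max-degree choice in the main loop is immediately removed from $U$ at that step, whether via an $I$-step or a non-$I$-step, so no extension vertex is ever examined in the main loop. Consequently, at every main-loop step the test ``$w \in I$'' is equivalent to the test ``$w \in A$'' using the final set $A$: each $I$-step addition is itself a max-degree vertex lying in $I$, and extension additions never appear as main-loop max-degree choices. A decoder armed only with $A$ can therefore replay the main loop using ``$w \in A$'' in place of ``$w \in I$'' to reconstruct the exact set $U(A)$, which has size at most $R$. Since $B \subseteq U(A)$ and $|B| = t - r$, the encoding $I \mapsto (A,B)$ is an injection into $\binom{V(H)}{r} \times \binom{U(A)}{t-r}$, yielding the desired bound $\binom{n}{r}\binom{R}{t-r}$.
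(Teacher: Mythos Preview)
The paper does not prove this proposition; it is quoted directly from \cite{mattheus2024}. Your argument is correct and is precisely the Kleitman--Winston container encoding that underlies the proof there: peel off max-degree vertices to build a fingerprint $A\subseteq I$ so that the residual set $U(A)$ of size at most $R$ is determined by $A$ alone, then encode $I$ as $(A,\,I\setminus A)\in\binom{V(H)}{r}\times\binom{U(A)}{t-r}$.
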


We also make use of the following form of the Kruskal-Katona theorem due to Lov\'{a}sz.

\begin{lemma}[\protect{\cite[Exercise 31(b)]{Lov93}}]\label{lem:kruskal-katona}
Given a set $X$ and positive integers $s\le t$, let $A$ be a set of $t$-element subsets of $X$, and let $B$ be the set of all $s$-element subsets of the sets in $A$. If $|A|=\binom{n}{t}$, then $|B|\ge \binom{n}{s}$.
\end{lemma}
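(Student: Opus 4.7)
The plan is to reduce \cref{lem:kruskal-katona} to the one-step Kruskal--Katona shadow inequality and then iterate. The one-step form asserts that for any family $\mathcal{F}$ of $t'$-subsets of $X$ with $|\mathcal{F}| \geq \binom{n}{t'}$, the lower shadow $\partial \mathcal{F}$ (the collection of $(t'-1)$-subsets lying in some member of $\mathcal{F}$) satisfies $|\partial \mathcal{F}| \geq \binom{n}{t'-1}$. Granting this, the lemma follows at once, since the $s$-set family $B$ equals $\partial^{t-s} A$, and each of the $t-s$ applications of the one-step inequality lowers the upper binomial index by one, ending at $\binom{n}{s}$.

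To prove the one-step inequality I would use the standard shifting argument. For indices $i < j$ in $X$, define $S_{ij}(\mathcal{F})$ by replacing each $F \in \mathcal{F}$ with $j \in F$, $i \notin F$ by $(F \setminus \{j\}) \cup \{i\}$, provided the result is not already in $\mathcal{F}$. A direct check shows $|S_{ij}(\mathcal{F})| = |\mathcal{F}|$ and $|\partial S_{ij}(\mathcal{F})| \leq |\partial \mathcal{F}|$: every new $(t'-1)$-subset entering the shadow after shifting can be injectively paired with an old one that leaves. Iterating $S_{ij}$ over all pairs produces a \emph{left-compressed} family, in which $F \in \mathcal{F}$, $i \in F$, $j < i$, and $j \notin F$ always force $(F \setminus \{i\}) \cup \{j\} \in \mathcal{F}$.

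For a left-compressed $\mathcal{F}$ I would close the argument by induction on $n$. Splitting along the maximum element $n$ of $X$, let $\mathcal{F}_0 = \{F \in \mathcal{F} : n \notin F\}$ and $\mathcal{F}_1 = \{F \setminus \{n\} : F \in \mathcal{F},\, n \in F\}$. The shadow decomposes into a part not containing $n$ (equal to $\partial \mathcal{F}_0 \cup \mathcal{F}_1$) and a part containing $n$ (corresponding to $\partial \mathcal{F}_1$), and left-compressedness applied with $i = n$ forces $\mathcal{F}_1 \subseteq \partial \mathcal{F}_0$, giving $|\partial \mathcal{F}| = |\partial \mathcal{F}_0| + |\partial \mathcal{F}_1|$. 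The inductive hypothesis applied to $\mathcal{F}_0$ and $\mathcal{F}_1$, together with Pascal's identity $\binom{n-1}{t'-1} + \binom{n-1}{t'-2} = \binom{n}{t'-1}$, then yields the desired bound. The main obstacle will be handling cases where $|\mathcal{F}_0|$ or $|\mathcal{F}_1|$ falls below the natural thresholds $\binom{n-1}{t'}$ and $\binom{n-1}{t'-1}$; this is resolved by inducting against the real-parameter (Lov\'asz) form of the inequality, which is stable under such imbalances and from which the stated integer version follows as a special case.
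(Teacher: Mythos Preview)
The paper does not prove \cref{lem:kruskal-katona}; it is stated with a citation to Lov\'asz's book and used as a black box, so there is no in-paper argument to compare against. Your sketch is the standard shifting proof of the Lov\'asz form of Kruskal--Katona and is essentially correct: the reduction to the one-step shadow inequality by iterating $\partial$, the compression $S_{ij}$ preserving $|\mathcal{F}|$ and not increasing $|\partial\mathcal{F}|$, the split along the largest element with $\mathcal{F}_1\subseteq\partial\mathcal{F}_0$, and the observation that one must run the induction with the real-parameter version to absorb imbalances between $|\mathcal{F}_0|$ and $|\mathcal{F}_1|$ are all right. One small point worth making explicit is that in the lemma as stated $n$ is the (in general non-integer) real number solving $|A|=\binom{n}{t}$, which is exactly why you need the Lov\'asz form rather than the cascade form throughout the induction; you allude to this at the end but it deserves to be said up front.
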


In the setting of graphs, the Kruskal-Katona theorem implies that if a graph contains $\binom{n}{t}$ copies of $K_t$, then it must contain at least $\binom{n}{s}$ copies of $K_s$ for $s\le t$. Now we are ready to prove \cref{thm:rs4t}

\begin{proof}[Proof of \cref{thm:rs4t}]
We first prove the theorem for the case where $s=t$.

Let $q\ge 2^{40}$ be a power of $2$ such that $t\in [2^{33}q\log^2 q,2^{33}(2q)\log^2 (2q)]$. Such a choice of $q$ exists because the union of the relevant intervals contains all sufficiently large integers.

Let $H$ be the graph given by \cref{thm:r4t} on $n:=q^2(q^2-q+1)$ vertices. Then we can apply \cref{prop:countindeps} with $R=2^{24}q^2$, $r=2^{10}q\log q$, and $\alpha=1/(2^{8}q)$ to show that, for any $t'\in [t/8,t]$, the number of independent sets of size $t'$ in $H$ is at most
\[
\binom{n}{r}\binom{R}{t'-r}\le n^r \binom{R}{t'} \le  q^{4r}\left(\frac{eR}{t'}\right)^{t'} \le (q/\log^2 q)^{t'}. 
\]
Here we have used the fact that by our choices of $t,n,R,r,t'$, we have $n\le q^4$, $r\le t' \le R/2$, $q^{4r}\le 2^{t'}$, and $2eR/t'\le q/\log^2 q$.

Let $G$ be a $(4,t)$-Ramsey graph containing exactly $N :=r_{K_t}(4,t)$ copies of $K_t$. Our goal is to prove a lower bound on $N$. We color the edges of $G$ as follows: Take a uniformly random map $\pi:V(G)\to V(H)$. For an edge $(v,w)\in G$, color it red if $(\pi(v),\pi(w))\in E(H)$, and blue otherwise (including the case where $\pi(v)=\pi(w)$).

Since $H$ is $K_4$-free by construction, there are no red $K_4$'s in $G$. For any copy $K$ of $K_t$ in $G$, it can be monochromatically blue only if its image $\pi(K)$ is an independent set in $H$. The probability that this occurs is bounded above by
\begin{align*}
    \rho &:=\sum_{i=1}^{t} \Pr[\pi(K)\text{ is an independent set of size }i] \\
    &\le \sum_{i=1}^{t/8} \binom{n}{i} \left(\frac{i}{n}\right)^t + \sum_{i=t/8+1}^t (q/\log^2 q)^{i} \left(\frac{i}{n}\right)^t \\
    &\le 2n^{-(t-t/8)} t^t + 2(q/\log^2 q)^t \left(\frac{t}{n}\right)^t \\
    &= O(t^t q^{-3.5t} + t^t (q^3 \log^2 q)^{-t}) \\
    &= O(t^t (q^3 \log^2 q)^{-t}).
\end{align*}
To show the first inequality, we trivially bound the probability that $|\pi(K)|\le i$ for each $i\le t/8$, then use the upper bound derived earlier for the number of independent sets of each size $i\in [t/8,t]$. The second-to-last line uses the fact that $n=\Theta(q^4)$.

The expected number of blue copies of $K_t$ in our coloring of $G$ is at most $N\rho$. If $N\rho<1$, then there is such a coloring with no red $K_4$ or blue $K_t$, contradicting the fact that $G$ is $(4,t)$-Ramsey. So, since $t=\Theta(q\log^2 q)$, and thus $q=\Theta(\frac{t}{\log^2 t})$, we have
\[
N\ge \frac{1}{\rho} =\Omega \left(\left( t^{-1} \left(q^3\log^2 q\right)\right)^t\right) = \frac{\left(\Omega(\frac{t^3}{\log^4 t})\right)^t}{t^t} =  \binom{\Omega(\frac{t^3}{\log^4(t)})}{t},
\]
using the estimate $\binom{n}{k}\le (\frac{en}{k})^k$. 
This shows the desired lower bound on $r_{K_t}(4,t)$.

By the Kruskal-Katona theorem, since $G$ contains $\binom{\Omega(\frac{t^3}{\log^4(t)})}{t}$ copies of $K_t$, it must contain at least $\binom{\Omega(\frac{t^3}{\log^4(t)})}{s}$ copies of $K_s$ for each $s\le t$. 

Finally, since $r(4,t)\le (1+o(1))\frac{t^3}{\log^2 t}=\left(\frac{t^3}{\log^4 t}\right)^{1+o(1)}$, we have $r_{K_s}(4,t)=\binom{\Omega(\frac{t^3}{\log^4(t)})}{s}\ge \binom{r(4,t)}{s}^{1-o(1)}$. Since $K_{r(4,t)}$ is $(4,t)$-Ramsey by definition, we have a trivial matching upper bound of $r_{K_s}(4,t)\le \binom{r(4,t)}{s}$. This concludes the proof.

\end{proof}

\subsection{Asymptotics of $r_{K_s}(3,t)$}\label{sec:rs3t}

The asymptotics for $r(3,t)$ have been thoroughly studied. It is known that \[r(3,t)=\Theta\left(\frac{t^2}{\log t}\right),\] where the upper bound was shown by Ajtai, Koml\'os, and Szemer\'edi \cite{AKS80} and the lower bound by Kim \cite{KimR3t}. This lower bound was initially proven by constructing triangle-free graphs on $n$ vertices with chromatic number $\Omega(\sqrt{n/\log n})$ using the R\"odl nibble method. However, a different construction, given by Mattheus and Verstraëte \cite[Section~3]{mattheus2024} as an adaptation of their own construction for $r(4,t)$, is more suitable for our approach.

\begin{theorem}[\cite{mattheus2024}]\label{thm:r3t-H}
    There exists a constant $C>0$ for which the following holds. For each prime power $q\ge 2^{40}$, there exists a triangle-free graph $H$ with $q^3+q^2+q+1$ vertices such that for every set $X\subseteq V(H)$ of size $|X|\ge C\cdot q^2$,
    \[e(H[X])\ge \frac{|X|^2}{C\cdot q}.\]
\end{theorem}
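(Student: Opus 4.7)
The plan is to reproduce the construction of Mattheus and Verstra\"ete from \cite[Section 3]{mattheus2024}, which adapts their $K_4$-free pseudorandom construction to the triangle-free setting. I would work with the projective 3-space $PG(3,q)$, whose point set has exactly $q^3+q^2+q+1$ elements. The graph $H$ is built on this vertex set, with an adjacency relation derived from an appropriate algebraic structure on the space (for instance, a suitably chosen polarity or symmetric bilinear form over $\mathbb{F}_q$), designed so that mutually adjacent triples are forbidden by the defining algebraic condition.

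First, I would verify triangle-freeness by a direct algebraic check: three mutually adjacent vertices would correspond to a triple of points satisfying a system of linear equations over $\mathbb{F}_q$ that is ruled out by the defining structure (for example, by restricting to non-absolute points of a polarity, or by cutting out a variety on which no three points are mutually incident). This is a clean dimension-counting argument.

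Next, I would establish the edge-density lower bound via pseudorandomness. The construction yields a graph that is (essentially) $d$-regular with $d = \Theta(q^2)$, and has second eigenvalue $\lambda = O(q)$, coming from the algebraic/geometric regularity of the adjacency relation. Once this spectral bound is in hand, the expander mixing lemma gives
\[
e(H[X]) \geq \frac{d}{2n}|X|^2 - \frac{\lambda}{2}|X|,
\]
where $n = q^3+q^2+q+1$. For $|X| \geq Cq^2$ with $C$ a sufficiently large absolute constant, the main term of order $|X|^2/q$ dominates the error term of order $q|X|$, yielding $e(H[X]) \geq |X|^2/(C'q)$ for an appropriate constant $C'$, which can be absorbed into the constant $C$ in the statement.

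The main obstacle is selecting a construction that is simultaneously triangle-free and spectrally pseudorandom with $\lambda = O(\sqrt{d})$. Triangle-freeness is typically a short algebraic verification, but the eigenvalue bound requires either character-sum estimates over $\mathbb{F}_q$ or a careful analysis of common neighborhoods in the projective geometry. Both approaches are standard in the algebraic pseudorandom graphs literature but must be executed carefully; once the eigenvalue bound is established, the remainder of the argument is essentially an application of the expander mixing lemma.
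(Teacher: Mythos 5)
The paper does not prove this statement; it quotes it from \cite{mattheus2024}, where the construction (Section~3 of that paper) is an adaptation of the Hermitian-unital construction used for $r(4,t)$. Concretely, the vertex count $q^3+q^2+q+1=(q+1)(q^2+1)$ is the number of lines of a generalized quadrangle of order $(q,q)$ (e.g.\ the symplectic quadrangle $W(q)$ in $PG(3,q)$): the graph $H$ is obtained from the concurrency structure of these lines by placing an independently chosen \emph{random} triangle-free graph (a randomly bipartitioned complete bipartite graph) on each pencil of $q+1$ concurrent lines. Triangle-freeness of $H$ then follows from the generalized quadrangle axiom --- three pairwise concurrent lines must pass through a common point --- combined with the local triangle-freeness of the graphs placed on each pencil; and the edge-distribution bound is proved by counting concurrent pairs inside $X$ via the (strongly regular) incidence structure and then applying a concentration argument to the random local graphs. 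This is exactly the template abstracted in \cref{lem:Rft}.

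Your proposal diverges from this in a way that leaves a genuine gap: you never actually specify a construction. ``A suitably chosen polarity or symmetric bilinear form \dots designed so that mutually adjacent triples are forbidden'' is the entire difficulty, not a detail. Standard polarity graphs (Erd\H{o}s--R\'enyi type, or those of quadrics and Hermitian varieties) are typically full of triangles, so the ``clean dimension-counting argument'' for triangle-freeness does not apply to any off-the-shelf choice. Moreover, what you are asking for --- a deterministic triangle-free $(n,d,\lambda)$-graph with $d=\Theta(n^{2/3})$ and $\lambda=O(\sqrt{d})$ --- sits exactly at the extremal density threshold for triangle-free pseudorandom graphs; the few known constructions achieving it (e.g.\ Alon's) are delicate, live on special vertex counts, and are not obtained by restricting a polarity. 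Deferring both the triangle-freeness and the eigenvalue bound to ``standard but careful'' algebraic estimates therefore hides the whole proof. Your final step --- the expander mixing lemma computation showing that $|X|\ge Cq^2$ makes the main term $\Theta(|X|^2/q)$ dominate the error $O(q|X|)$ --- is correct \emph{conditional} on such a graph existing, and a fully deterministic spectral construction would indeed be an interesting alternative route; but as written the proposal asserts the existence of the key object rather than constructing it, whereas the cited proof sidesteps the need for a deterministic spectrally optimal triangle-free graph entirely by randomizing locally over a triangle-free incidence geometry.
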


The proof of \cref{thm:rs3t} uses the same strategy as the proof of \cref{thm:rs4t}. However, different choices of parameters are used in this proof, so we include the full proof for the sake of completeness.

\begin{proof}[Proof of \cref{thm:rs3t}]
    As in the proof of \cref{thm:rs4t}, we first prove the theorem in the case $s=t$. Let $C$ be the constant from \cref{thm:r3t-H}, and let $q\ge 2^{40}$ be a power of $2$ such that $t \in [2^8Cq\log^2q, 2^8C(2q)\log^2(2q)]$.

    Let $H$ be the graph given by \cref{thm:r3t-H} on $n := q^3+q^2+q+1$ vertices. Applying \cref{prop:countindeps} with the parameters $R = 2^5Cq^2, r = 2Cq\log q$ and $\alpha = 1/(C\cdot q)$ gives that for any $t'\in [t/8, t]$, the number of independent sets of size $t'$ in $H$ is at most 
    \[
    \binom{n}{r}\binom{R}{t'-r}\le n^r \binom{R}{t'} \le  (2q^{3})^r\left(\frac{eR}{t'}\right)^{t'} \le (q/\log^2 q)^{t'}. 
    \]
    Here we have used the facts that $n\le 2q^3$, $r\le t' \le R/2$, $(2q^{3})^r\le 2^{t'}$, and $2eR/t'\le q/\log^2 q$.

    Let $G$ be a $(3,t)$-Ramsey graph containing exactly $N :=r_{K_t}(3,t)$ copies of $K_t$. Consider the following coloring of $G$: Take a uniformly random map $\pi:V(G)\to V(H)$. For an edge $(v,w)\in G$, color it red if $(\pi(v),\pi(w))\in E(H)$, and blue otherwise (including the case where $\pi(v)=\pi(w)$).

    Since $H$ is taken to be triangle-free, there are no red triangles in $G$. For any copy $K$ of $K_t$ in $G$, it can be monochromatically blue only if its image $\pi(K)$ is an independent set in $H$. The probability that this occurs is bounded above by
    \begin{align*}
        \rho &:=\sum_{i=1}^{t} \Pr[\pi(K)\text{ is an independent set of size }i] \\
        &\le \sum_{i=1}^{t/8} \binom{n}{i} \left(\frac{i}{n}\right)^t + \sum_{i=t/8+1}^t (q/\log^2 q)^{i} \left(\frac{i}{n}\right)^t \\
        &\le 2n^{-(t-t/8)} t^t + 2(q/\log^2 q)^t \left(\frac{t}{n}\right)^t \\
        &= O(t^t q^{-21t/8} + t^t (q^2 \log^2 q)^{-t}) \\
        &= O(t^t (q^2 \log^2 q)^{-t}),
    \end{align*}
    by the same reasoning as in the proof of \cref{thm:rs4t}. 
    Since the expected number of blue copies of $K_t$ in this coloring is at most $N\rho$, we must have $N\rho \ge 1$ because otherwise there exists is a coloring with no red $K_4$ or blue $K_t$, contradicting the fact that $G$ is $(3,t)$-Ramsey. So, by the same reasoning as in the proof of \cref{thm:rs4t}, we have
    \[
    N\ge \frac{1}{\rho} =\Omega \left(\left( t^{-1} \left(q^2\log^2 q\right)\right)^t\right) =  \binom{\Omega(\frac{t^2}{\log^2(t)})}{t},
    \]
    showing the lower bound on $r_{K_t}(3,t)$ as desired.
    
    By the Kruskal-Katona theorem, since $G$ contains $\binom{\Omega(\frac{t^2}{\log^2(t)})}{t}$ copies of $K_t$, it must contain at least $\binom{\Omega(\frac{t^2}{\log^2(t)})}{s}$ copies of $K_s$ for each $s\le t$. Combining this with the fact that $r(3,t)=\Theta(t^2/\log t)$, as well as the trivial upper bound $K_s(3,t)\le \binom{r(3,t)}{s}$, we arrive at the second equation in the Theorem statement as before. This concludes the proof.
\end{proof}

\section{Cycle-complete $K_s$-Ramsey numbers}\label{sec:cycle}

The notion of $(t_1,t_2)$-Ramsey graphs extends to a more general setting, where monochromatic subgraphs other than cliques are considered: For any graphs $H_1$ and $H_2$, we say a graph $G$ is $(H_1,H_2)$-Ramsey if every $2$-coloring of its edges in red and blue results in either a red copy of $H_1$ or a blue copy of $H_2$. The definition of the $F$-Ramsey number can likewise be generalized by letting $r_F(H_1,H_2)$ be the smallest number of copies of $F$ contained in an $(H_1,H_2)$-Ramsey graph. The diagonal case of this more general definition is briefly discussed in \cite{fox2023triangle}. There, the authors point out that $r_F(H,H)=0$ in the following two cases:
\begin{itemize}
    \item $m_2(F)>m_2(H)$ (as shown in \cite{RR1995}), where we define $m_2(G)=\max_{G'\subseteq G}\frac{e(G')-1}{|G'|-2}$;
    \item The girth of $F$ is smaller than the girth of $H$ (equivalent to a result in \cite{reiher2023girth}).
\end{itemize}

One can ask whether an analogue of \cref{conj:offdiag} holds for $r_{K_s}(H_1,H_2)$, for some choices of $(H_1,H_2)$ other than a pair of cliques. Namely, is it true that
\[r_{K_s}(H_1,H_2) = \binom{r(H_1,H_2)}{s},\]
for some appropriate range of values of $s$? (Here $r(H_1,H_2)$ denotes the usual Ramsey number.) 
In this section, we study the case where only one of $H_1$, $H_2$ is a clique. We obtain lower bounds on $r_{K_s}(H_1,K_t)$ when $t\ge 2$ and $s\le t$, under certain conditions on $H_1$.

In recent work of Conlon, Mattheus, Mubayi, and Verstraëte \cite{conlon2024cycle}, similar techniques as in \cite{mattheus2024} are used to show lower bounds for the cycle-complete Ramsey numbers $r(C_5, K_t)$ and $r(C_7, K_t)$. They derived these bounds through a more general result conditioned on the existence of a graph with certain properties.

Before stating this result, let us recall some definitions from \cite{conlon2024cycle}. Given a graph $F$, let $\mathcal{E}(F)$ denote the set of sequences of edge-disjoint bipartite subgraphs $F_1,\dots, F_k\subseteq F$ such that the edges of the subgraphs $F_i$ form a partition of the edges of $F$, each $F_i$ has at least one edge, and each pair $F_i\neq F_j$ shares at most one vertex. Given $H = (F_1,\dots, F_k)\in \cE(F)$, define $J(H)$ to be the bipartite graph with parts $[k]$ and $V(F)$, where each $i\in [k]$ is adjacent to the vertices $V(F_i)\subseteq V(F)$. Then let $\cL(F) = \{ J(H): H\in \cE(F)\}\cup \{C_4\}$. Finally, for $m\le n$ and $a\le b$, define an $(m,n,a,b)$-graph to be a bipartite graph with parts of size $m$ and $n$ such that every vertex in the part with size $n$ has degree $a$, and every vertex in the part with size $m$ has degree $b$. Now we are ready to state the main result in \cite{conlon2024cycle}.

\begin{theorem}[\cite{conlon2024cycle}]\label{thm:FKt}
    Let $F$ be a graph and let $a,b,m,n$ be positive integers with $a\ge 2^{12}(\log n)^3$ such that there exists an $\mathcal{L}(F)$-free $(m,n,a,b)$-graph. If $t_0=2^8 n(\log n)^2/ab$, then
    \[
        r(F,K_{t_0})=\Omega\left(\frac{bt_0}{\log n}\right).
    \]
\end{theorem}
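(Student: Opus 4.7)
The plan is to construct, via a probabilistic argument, an $F$-free graph $G$ on $N = \Omega(bt_0/\log n)$ vertices whose independence number is strictly less than $t_0$; such a $G$ immediately witnesses $r(F,K_{t_0})>N$. The construction is built from the hypothesized $\cL(F)$-free $(m,n,a,b)$-graph $\Gamma$ by overlaying random choices, in the spirit of the Mattheus--Verstra\"ete constructions already used in this paper. Concretely, let $\Gamma$ have parts $X$ (size $m$, degree $b$) and $Y$ (size $n$, degree $a$). I would first build an auxiliary graph $H$ on vertex set $Y$ by, for each $x\in X$, choosing a uniformly random balanced bipartition of the $b$ neighbors $N_\Gamma(x)\subseteq Y$ and adding all edges of the resulting complete bipartite graph to $H$. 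I would then pass to $G:=H[S]$, the subgraph induced on a uniformly random subset $S\subseteq Y$ with $|S|=N$. By $C_4$-freeness of $\Gamma$ (since $C_4\in\cL(F)$), two vertices of $Y$ share at most one common $X$-neighbor, so every edge of $H$ has a unique \emph{witness} $x\in X$.

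To verify $F$-freeness, suppose a copy of $F$ occurs in $G$, and group its edges by their witnesses into subgraphs $F_1,\dots,F_k$ of $F$. Each $F_i$ is bipartite by construction (its edges sit in the complete bipartite graph contributed by its witness $x_i$), and any two distinct $F_i,F_j$ share at most one vertex of $F$, since otherwise two vertices of $Y$ would share two common $X$-neighbors, again forbidden by $C_4$-freeness. Hence $(F_1,\dots,F_k)\in\cE(F)$, and the associated bipartite graph $J((F_1,\dots,F_k))\in\cL(F)$ embeds as a subgraph of $\Gamma$ via the map sending each class $i$ to $x_i$ and each $v\in V(F)$ to its image in $Y$; this contradicts $\cL(F)$-freeness of $\Gamma$.

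The hard part is to control $\alpha(G)<t_0$ with positive probability. The strategy follows the blueprint of \cref{thm:rs3t} and \cref{thm:rs4t}: first use the $a$-regularity of $\Gamma$ on the $Y$-side, together with concentration for the random bipartitions, to show that $H$ satisfies an expander-type inequality $e(H[T])=\Omega(|T|^2/a)$ for every $T\subseteq Y$ above a suitable threshold size, taking over the role played in those theorems by \cref{thm:r4t} and \cref{thm:r3t-H}. Then \cref{prop:countindeps}, applied to $H$ with suitably chosen parameters, bounds the number of independent sets of each size $t'$ in the relevant range. Finally, a union bound over potential $t_0$-independent sets of $H$ surviving when intersected with $S$---each of which lies inside $S$ with probability roughly $(N/n)^{t_0}$---shows that with positive probability $G$ has no independent set of size $t_0$. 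The delicate step is matching the count of $t_0$-independent sets of $H$ against $(n/N)^{t_0}$: the hypotheses $a\ge 2^{12}(\log n)^3$ and $t_0=2^8 n(\log n)^2/(ab)$ in the theorem statement are calibrated precisely to make this tradeoff succeed with $N=\Omega(bt_0/\log n)$.
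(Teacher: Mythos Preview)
This theorem is quoted from \cite{conlon2024cycle} and is not proved in the present paper, so there is no in-paper proof to compare against directly. That said, the paper does import the key technical ingredient from the same source as \cref{lem:Rft}, and your proposal lines up with it: your random-bipartition construction of $H$ on $Y$ is exactly the graph whose existence \cref{lem:Rft} asserts, and your $F$-freeness argument via edge ``witnesses'' and the embedding of $J((F_1,\dots,F_k))$ into $\Gamma$ is correct and is the intended use of the family $\cL(F)$. The second half of your plan---feed the expander inequality into \cref{prop:countindeps} to bound the number of independent $t_0$-sets in $H$, then pass to a random induced subgraph $G=H[S]$ of size $N=\Theta(bt_0/\log n)$ and union-bound---is the Mattheus--Verstra\"ete template and is almost certainly how \cite{conlon2024cycle} proceeds from \cref{lem:Rft} to \cref{thm:FKt}.

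One correction worth flagging: the expander bound you write, $e(H[T])=\Omega(|T|^2/a)$, has the wrong density. As recorded in \cref{lem:Rft}, the correct inequality is $e(H[T])\ge \dfrac{a^2}{2^8 m}\,|T|^2$ for $|T|\ge 2^{10}(m\log n)/a$; the density is $\Theta(a^2/m)$, not $\Theta(1/a)$, and these coincide only when $m\asymp a^3$, which is not assumed. This changes the parameters you plug into \cref{prop:countindeps} (one should take $R=2^{10}(m\log n)/a$ and $\alpha=a^2/(2^{7}m)$), but not the structure of the argument. With those parameters and the identity $na=mb$, the union-bound computation balances at $N$ equal to a small constant times $n\log n/a = bt_0/(2^8\log n)$, exactly as the statement predicts.
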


Here, we obtain an analogous bound on $r_{K_s}(F,K_t)$ in terms of the lower bound in \cref{thm:FKt}.

\begin{theorem} \label{thm:sFKt}
    Let $F$ be a graph and let $a,b,m,n$ be positive integers with $a\ge 2^{12}(\log n)^3$ such that there exists an $\mathcal{L}(F)$-free $(m,n,a,b)$-graph. If $t=2^{11} n(\log n)^2/ab$, then for all $s\le t$,
    \[
        r_{K_s}(F,K_t)=\Omega\left(\binom{g(F,t)}{s}\right),
    \]
    where $g(F,t) = \Omega(\frac{bt_0}{\log n})$ with $t_0=t/8$ is the lower bound from \cref{thm:FKt}.
\end{theorem}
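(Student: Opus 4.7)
The plan is to adapt the random homomorphism argument from the proofs of \cref{thm:rs4t,thm:rs3t} to the general setting governed by \cref{thm:FKt}. The proof of \cref{thm:FKt} in \cite{conlon2024cycle} constructs, from the $\mathcal{L}(F)$-free $(m,n,a,b)$-graph, an $F$-free host graph $H$ on $n$ vertices. I will need to verify that this $H$ satisfies a uniform edge-density estimate of the form $2e(H[X])\ge \alpha|X|^2$ for every $X\subseteq V(H)$ of size at least some threshold $R$, with $\alpha$ comparable (up to log factors) to $1/g(F,t)$ and $R=\Theta(n/a)$; this is the direct analogue of \cref{thm:r4t,thm:r3t-H}. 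Extracting this uniform density bound from the argument in \cite{conlon2024cycle} --- rather than the single-scale independence number bound that they state --- will be the main technical step.

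Once $H$ is in hand, I would apply \cref{prop:countindeps} with these values of $R$ and $\alpha$ together with a choice of $r$ satisfying $e^{-\alpha r}n\le R$. The output is an upper bound of the form $(n/g(F,t))^{t'}$ on the number of independent sets of any size $t'\in [t/8,t]$ in $H$, mirroring the two analogous bounds derived in \cref{sec:rs4t,sec:rs3t}. The choice $t_0=t/8$ in the theorem statement is precisely what is needed to ensure that this range of $t'$ covers the regime in which \cref{prop:countindeps} becomes effective; the factor of $8$ also provides the slack needed to absorb the small-$i$ contribution in the probability sum that appears shortly.

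Then, following the proofs of \cref{thm:rs4t,thm:rs3t} almost verbatim, I would let $G$ be an $(F,K_t)$-Ramsey graph containing exactly $N:=r_{K_t}(F,K_t)$ copies of $K_t$, and color the edges of $G$ at random via a uniform map $\pi:V(G)\to V(H)$, with $(v,w)$ red if $(\pi(v),\pi(w))\in E(H)$ and blue otherwise. Because $H$ is $F$-free, the coloring contains no red copy of $F$, so the Ramsey property forces the expected number of blue copies of $K_t$ to be at least one. Bounding this expectation as $N\rho$, where $\rho$ is the probability that a fixed $K_t$ in $G$ is mapped into an independent set of $H$, and splitting the sum over image sizes $i$ at $i=t/8$ (trivial $\binom{n}{i}(i/n)^t$ estimate for $i\le t/8$, \cref{prop:countindeps} bound for $i> t/8$) gives $\rho = O\bigl((t/g(F,t))^t\bigr)$. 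Rearranging and using the standard estimate $\binom{n}{k}\le (en/k)^k$ yields $N=\Omega\bigl(\binom{g(F,t)}{t}\bigr)$, and a final application of Kruskal-Katona (\cref{lem:kruskal-katona}) upgrades this into $\Omega\bigl(\binom{g(F,t)}{s}\bigr)$ copies of $K_s$ for every $s\le t$, completing the proof.
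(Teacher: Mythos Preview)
Your proposal is correct and follows essentially the same approach as the paper's proof sketch: random homomorphism into an $F$-free host graph $H$, \cref{prop:countindeps} to bound independent sets of all sizes in $[t/8,t]$, the split of the probability sum at $i=t/8$, and Kruskal--Katona at the end.

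One remark: the step you flag as ``the main technical step'' --- extracting a uniform edge-density estimate for $H$ from \cite{conlon2024cycle} --- is in fact already done for you. The paper quotes it as \cref{lem:Rft} (which is \cite[Lemma~2]{conlon2024cycle}): there is an $F$-free graph $H$ on $n$ vertices such that every $X\subseteq V(H)$ with $|X|\ge R:=2^{10}m(\log n)/a$ satisfies $e(H[X])\ge \frac{a^2}{2^8 m}|X|^2$. So $\alpha=a^2/(2^8 m)$, not $\Theta(1/g(F,t))$, and $R=\Theta(m(\log n)/a)$, not $\Theta(n/a)$; your heuristic guesses for these parameters are off, but since the lemma hands you the exact values this does not affect the argument. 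With $r=t/(2^3\log n)$ one checks $e^{-\alpha r}n\le R$ and $n^r\le e^{t'}$ for $t'\ge t/8$, and the rest of your outline goes through verbatim.
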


Similarly to the proof of \cref{thm:rs4t}, we use the following technical result proven in \cite{conlon2024cycle}.

\begin{lemma}[\protect{\cite[Lemma~2]{conlon2024cycle}}] \label{lem:Rft}
    Let $F$ be a graph and let $a,b,m,n$ be positive integers with $a\ge 2^{12}(\log n)^3$ such that there exists an $\mathcal{L}(F)$-free $(m,n,a,b)$-graph. Then there exists an $F$-free graph $H$ on $n$ vertices such that each $X\subseteq V(H)$ with $|X|\ge 2^{10}(m\log n)/a$ has 
    \[e(H[X])\ge \frac{a^2}{2^8m}|X|^2.\]
\end{lemma}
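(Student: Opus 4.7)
The plan is to mirror the template of the proofs of Theorems~\ref{thm:rs4t} and \ref{thm:rs3t} from Section~\ref{sec:complete-kst}, with the $F$-free graph from Lemma~\ref{lem:Rft} replacing the Mattheus-Verstra\"ete constructions. First I would invoke Lemma~\ref{lem:Rft} to obtain an $F$-free graph $H$ on $n$ vertices whose induced subgraphs of size at least $R := 2^{10}(m\log n)/a$ have edge density at least $a^2/(2^8 m)$. Feeding this into Proposition~\ref{prop:countindeps} with $\alpha = a^2/(2^7 m)$ and $r$ chosen of order $m\log n/a^2$ so that $e^{-\alpha r}n \le R$, I obtain a quantitative bound on the number $I(t')$ of independent sets of size $t'$ in $H$ for each $t' \in [r, t]$.

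Next, given a minimum $(F, K_t)$-Ramsey graph $G$ with $N := r_{K_t}(F, K_t)$ copies of $K_t$, I would consider the random edge-coloring induced by a uniformly random map $\pi: V(G) \to V(H)$: color an edge $(v,w)\in E(G)$ red if $(\pi(v),\pi(w))\in E(H)$ and blue otherwise. Since $H$ is $F$-free, no red copy of $F$ can appear. Hence the Ramsey property forces the expected number of blue $K_t$'s to be at least $1$, giving $N\rho \ge 1$, where $\rho$ is the probability that $\pi$ maps a fixed copy of $K_t$ to an independent set in $H$.

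The heart of the proof is bounding
\[
\rho \le \sum_{i=1}^{t/8} \binom{n}{i}\left(\frac{i}{n}\right)^{t} + \sum_{i=t/8+1}^{t} I(i)\left(\frac{i}{n}\right)^{t},
\]
analogously to Section~\ref{sec:complete-kst}. Each sum should be dominated by a single term: the first by the $i = t/8$ term, and the second by the $i = t$ term, contributing $O((t/n)^{7t/8})$ and $O((b\log n/a^2)^t)$ respectively. After substituting $t = 2^{11}n(\log n)^2/(ab)$, both can be bounded by $O((\log n/b)^t) = O((t/g(F,t))^t)$ up to polynomial factors in $t$. This yields $N \ge \binom{\Omega(g(F,t))}{t}$, which by the Kruskal-Katona theorem (Lemma~\ref{lem:kruskal-katona}) lifts to $r_{K_s}(F, K_t) \ge \binom{\Omega(g(F,t))}{s}$ for every $s \le t$.

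The main obstacle will be the delicate verification that both contributions to $\rho$ stay below the target $(\log n/b)^t$. The second contribution reduces to the inequality $b \le a$, which follows from $m \le n$ together with the identity $ma = nb$; the first reduces to a polynomial-in-$\log n$ lower bound on $a$, which is comfortably supplied by the hypothesis $a \ge 2^{12}(\log n)^3$. Once these two inequalities are in place, together with the routine check that the chosen $r$ satisfies $r \le t/8$ and $n^r \le e^{t'}$ for $t' \ge t/8$ (also a consequence of $b \le a$), the rest of the proof follows the arguments in Section~\ref{sec:complete-kst} essentially verbatim.
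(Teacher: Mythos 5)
Your proposal does not prove the statement at issue. The statement is \cref{lem:Rft} itself: a structural result asserting that from an $\mathcal{L}(F)$-free $(m,n,a,b)$-graph one can \emph{construct} an $F$-free graph $H$ on $n$ vertices in which every subset $X$ with $|X|\ge 2^{10}(m\log n)/a$ spans at least $\frac{a^2}{2^8m}|X|^2$ edges. Your first sentence invokes \cref{lem:Rft} as a given and then proceeds to derive the Ramsey-number bound of \cref{thm:sFKt}; in other words, you have written (a reasonable sketch of) the proof of \cref{thm:sFKt}, not of \cref{lem:Rft}. Relative to the assigned statement this is circular: nothing in your argument constructs $H$, establishes that $H$ is $F$-free, or proves the claimed lower bound on $e(H[X])$. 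Note also that the paper itself does not prove this lemma; it is imported verbatim from \cite{conlon2024cycle}.

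A genuine proof would have to engage with the bipartite $(m,n,a,b)$-graph directly: take the part of size $n$ as $V(H)$, and for each vertex $p$ of the part of size $m$ place a suitably chosen (random) set of edges inside the neighborhood $N(p)$, arranged so that any copy of $F$ in $H$ would decompose into edge-disjoint pieces each lying in a single neighborhood, thereby forcing a copy of some $J(H')\in\mathcal{L}(F)$ (or a $C_4$) in the bipartite graph --- contradicting $\mathcal{L}(F)$-freeness. The edge-density bound for large $X$ then comes from a counting and concentration argument exploiting the biregularity (every vertex of the $n$-side has degree $a$, every vertex of the $m$-side degree $b$), which is where the threshold $2^{10}(m\log n)/a$ and the density $a^2/(2^8m)$ arise. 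None of these ingredients appears in your proposal, so as a proof of \cref{lem:Rft} it has a complete gap; as a proof of \cref{thm:sFKt} it is on the right track but was not the task.
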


\begin{proof}[Proof Sketch of \cref{thm:sFKt}]
    As before, we first prove the case where $s=t=2^{11}n(\log n)^2/ab$. Applying \cref{prop:countindeps} to the graph $H$ given by \cref{lem:Rft} with $R = 2^{10}m(\log n)/a$, $r = t/(2^3\log n)$, and $\alpha = \frac{a^2}{2^8m}$, we conclude that for any $t'\in [t/8, t]$, the number of independent sets of size $t'$ in $H$ is at most
    \[\binom{n}{r}\binom{R}{t'-r}\le n^r\lpr{\frac{eR}{t'}}^{t'}\le \lpr{\frac{2^{10}e^2m\log n}{at'}}^{t'},\]
    where the second inequality uses the fact that $n^r\le n^{t'/\log n}=e^{t'}$.

    Let $G$ be an $(F, K_t)$-Ramsey graph and suppose that $G$ contains $N$ copies of $K_t$ as subgraphs. Taking the same uniformly random map $\pi:V(G)\to V(H)$ as before, we color an edge of $G$ red if it gets mapped to an edge in $H$, and blue otherwise. By the same argument and calculations as before, we can show that 
    \[N\ge \binom{\Omega\lpr{g(F,K_t)}}{t}.\]
    Applying the Kruskal-Katona theorem then yields the desired bound on $r_{K_s}(F,K_t)$ for all $s\le t$.
\end{proof}

Noting that the conditions of \cref{thm:FKt} are satisfied for $F=C_5$ and $F=C_7$ for appropriate values of $(m,n,a,b)$, the authors of \cite{conlon2024cycle} derive the following bounds on the cycle-complete Ramsey numbers in those two cases.

\begin{theorem}[\cite{conlon2024cycle}] \label{thm:cycle-complete}
    As $t\to \infty$,
    \[r(C_5,K_t) = \Omega\lpr{\frac{t^{10/7}}{(\log t)^{13/7}}}, \quad \text{and} \quad r(C_7,K_t) = \Omega\lpr{\frac{t^{5/4}}{(\log t)^{3/2}}}.\]
\end{theorem}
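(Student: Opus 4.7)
The plan is to deduce each of the two bounds as a direct specialization of \cref{thm:FKt}, where the choice of parameters $(m,n,a,b)$ is dictated by an explicit algebraic construction of an $\mathcal{L}(F)$-free bipartite host graph. No further machinery beyond \cref{thm:FKt} is needed: the entire content is producing, for each $F \in \{C_5, C_7\}$, an $\mathcal{L}(F)$-free $(m,n,a,b)$-graph whose parameters make the bound $\Omega(bt_0/\log n)$ come out to the claimed fractional power of $t_0$.

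First, I would enumerate $\mathcal{L}(F)$ for $F \in \{C_5, C_7\}$ by considering all partitions of the edges of $F$ into edge-disjoint bipartite subgraphs meeting pairwise in at most one vertex. Since odd cycles are not bipartite, no single part $F_i$ can cover all of $F$, so each decomposition consists of shorter paths and single edges; the resulting bipartite incidence graphs $J(H)$ form a short, explicit finite list of small bipartite graphs (together with the mandatory $C_4$). This enumeration tells us exactly which bipartite subgraphs the host must avoid.

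Second, I would exhibit an $\mathcal{L}(F)$-free $(m,n,a,b)$-graph with parameters tuned to optimize the output of \cref{thm:FKt}. For $F = C_5$, the natural candidate is the point-line incidence graph arising from a generalized quadrangle over $\mathbb{F}_q$ (possibly with a polarity- or algebraic-variety-based modification to ensure $\mathcal{L}(C_5)$-freeness), with $m, n$ and $a, b$ given by fixed powers of $q$ chosen so that the final exponent in $t_0$ works out to $10/7$. For $F = C_7$, a generalized hexagon over $\mathbb{F}_q$ plays the analogous role, producing the exponent $5/4$. In each case the $\mathcal{L}(F)$-freeness follows from the incidence axioms of the algebraic structure together with a case analysis over the members of $\mathcal{L}(F)$, and the hypothesis $a \ge 2^{12}(\log n)^3$ is satisfied once $q$ is taken sufficiently large.

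Third, substituting these parameter scalings into \cref{thm:FKt} with $t_0 = 2^8 n(\log n)^2/(ab)$, I would invert to express $q$ as a function of $t_0$ and verify that $bt_0/\log n$ matches $\Omega(t_0^{10/7}/(\log t_0)^{13/7})$ and $\Omega(t_0^{5/4}/(\log t_0)^{3/2})$ respectively; the fractional exponents arise from the ratio of the polynomial degrees of $n$, $a$, $b$ in $q$, while the logarithmic factors come from the $(\log n)^2$ appearing in $t_0$. The main obstacle is the $\mathcal{L}(F)$-freeness verification for the algebraic construction: for each member of $\mathcal{L}(F)$ one must rule out an embedding into the incidence graph using the generalized-polygon axioms (uniqueness of lines through a pair of collinear points, prescribed girth, controlled local structure). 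This case analysis is the technical heart of the argument, because $\mathcal{L}(F)$ contains several non-isomorphic bipartite graphs corresponding to distinct edge decompositions of the odd cycle $F$, and each must be forbidden separately.
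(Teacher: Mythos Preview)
Your proposal is correct and matches the paper's treatment: the paper does not give its own proof of this statement but simply records it as a consequence of \cref{thm:FKt} applied with the parameter choices worked out in \cite{conlon2024cycle}, which is exactly the reduction you describe. Your sketch merely expands on what the cited source does (enumerate $\mathcal{L}(F)$, build an $\mathcal{L}(F)$-free $(m,n,a,b)$-graph from a generalized-polygon-type incidence structure, and substitute into \cref{thm:FKt}); there is nothing to compare beyond that, since the present paper defers the entire argument to \cite{conlon2024cycle}.
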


In the same manner, \cref{thm:sFKt} yields the following bounds.

\begin{theorem}
    As $t\to \infty$, for every $s\le t$ we have
    \[r_s(C_5,K_t) = \binom{\Omega\lpr{\frac{t^{10/7}}{(\log t)^{13/7}}}}{s}, \quad \text{and} \quad r_s(C_7,K_t) = \binom{\Omega\lpr{\frac{t^{5/4}}{(\log t)^{3/2}}}}{s}.\]
\end{theorem}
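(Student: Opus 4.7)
The plan is to derive these bounds as direct applications of \cref{thm:sFKt}, taking for $F=C_5$ and $F=C_7$ the same parameter choices $(m,n,a,b)$ that the authors of \cite{conlon2024cycle} use to deduce \cref{thm:cycle-complete} from \cref{thm:FKt}. Concretely, for each of $C_5$ and $C_7$, they exhibit $\mathcal{L}(F)$-free $(m,n,a,b)$-graphs meeting the hypothesis $a\ge 2^{12}(\log n)^3$ of \cref{thm:FKt}, and whose parameters produce the stated lower bounds on $r(F,K_{t_0})$. Since \cref{thm:sFKt} has exactly the same hypotheses as \cref{thm:FKt}, it applies for these same parameter choices.

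The steps I would take: first, for $F=C_5$ (resp.\ $F=C_7$), invoke the $\mathcal{L}(F)$-free $(m,n,a,b)$-graph from \cite{conlon2024cycle} that witnesses $r(F,K_{t_0})=\Omega(t_0^{10/7}/(\log t_0)^{13/7})$ (resp.\ $r(F,K_{t_0})=\Omega(t_0^{5/4}/(\log t_0)^{3/2})$). Second, set $t=2^{11}n(\log n)^2/(ab)$ as in \cref{thm:sFKt}; since in these constructions $t_0$ and $t$ differ only by a bounded multiplicative constant, the function $g(F,t)=\Omega(bt_0/\log n)$ from \cref{thm:sFKt} has the same asymptotic order as the lower bound in \cref{thm:cycle-complete}, up to constants absorbed by the $\Omega$. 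Third, feed this into \cref{thm:sFKt} to conclude
\[r_{K_s}(F,K_t)=\Omega\lpr{\binom{g(F,t)}{s}}\]
for all $s\le t$, which, after substituting the asymptotics of $g$, gives the two bounds in the statement.

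One mild technical point is that \cref{thm:cycle-complete} is stated only for integer values $t_0$ of the $t$-parameter appearing in \cref{thm:FKt}, which are discretized via the choice of the underlying prime power $q$ (or analogous parameter) in the graph constructions. As in the proofs of \cref{thm:rs4t} and \cref{thm:rs3t}, to obtain the bounds for all sufficiently large $t$ one chooses the underlying parameter so that the induced value of $t$ is in a bounded-ratio interval containing the target; since the constructions in \cite{conlon2024cycle} permit this (as they do in \cite{mattheus2024}), the resulting constants are absorbed in the $\Omega$. With that caveat, the argument is a direct composition of \cref{thm:sFKt} with the constructions of \cite{conlon2024cycle}, and the main obstacle is simply verifying the parameter matching; no new ideas are needed beyond those already used to prove \cref{thm:rs4t} and \cref{thm:sFKt}.
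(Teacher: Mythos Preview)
Your proposal is correct and matches the paper's own approach: the paper does not give a separate proof but simply notes that ``in the same manner, \cref{thm:sFKt} yields the following bounds,'' i.e., one plugs the $\mathcal{L}(C_5)$-free and $\mathcal{L}(C_7)$-free $(m,n,a,b)$-graphs from \cite{conlon2024cycle} into \cref{thm:sFKt} exactly as they were plugged into \cref{thm:FKt} to obtain \cref{thm:cycle-complete}. Your handling of the discretization/rounding issue via bounded-ratio intervals is also in line with how the paper treats the analogous point in the proofs of \cref{thm:rs4t} and \cref{thm:rs3t}.
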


\noindent {\bf Acknowledgments.} The authors would like to thank Lisa Sauermann for helpful discussions about these problems and comments on the paper.

\printbibliography
\end{document}